\theoremstyle{theorem}
\newtheorem{lemma}{Lemma}
\newtheorem{theorem}{Theorem}
\newtheorem{proposition}{Proposition}
\theoremstyle{definition}
\newcommand{\FF}{\mathcal{F}}
\newcommand{\conv}{\mathop{\mathrm{conv}}}
\newcommand{\R}{\mathbb{R}}
\title{Extended Formulations for Sparsity Matroids}
\author{%
Satoru Iwata\thanks{%
Department of Mathematical Informatics,
University of Tokyo,
Japan.
E-mail: iwata@mist.i.u-tokyo.ac.jp
}
\and
Naoyuki Kamiyama\thanks{%
Institute of Mathematics for Industry, 
Kyushu University, 
Japan.
E-mail: kamiyama@imi.kyusyu-u.ac.jp
}
\and
Naoki Katoh\thanks{%
Department of Architecture and Architectural Engineering, 
Kyoto University, Japan.
E-mail: naoki@archi.kyoto-u.ac.jp
}
\and
Shuji Kijima\thanks{%
Department of Informatics, 
Kyushu University.
E-mail: kijima@inf.kyushu-u.ac.jp
}
\and
Yoshio Okamoto\thanks{%
Department of Communication Engineering and Informatics,
Graduate School of Informatics and Engineering,
The University of Electro-Communications.
E-mail: okamotoy@uec.ac.jp
}
}
\date{
}
\begin{document}

\maketitle

\begin{abstract}
We show the existence of a
polynomial-size extended formulation for the base
polytope of a $(k,\ell)$-sparsity matroid.
For an undirected graph $G=(V,E)$,
the size of the formulation is 
$O(|V||E|)$ when $k \geq \ell$ and
$O(|V|^2 |E|)$ when $k \leq \ell$.
To this end, we employ the technique developed by Faenza et al.\ recently
that uses a randomized communication protocol.
\end{abstract}

\section{Introduction}

Let $k,\ell$ be integers such that $0 \leq \ell \leq 2k - 1$.
A simple undirected graph $G=(V,E)$ is \emph{$(k,\ell)$-sparse} if
$|F| \leq \max\{k|V(F)|-\ell, 0\}$ for every $F\subseteq E$, where
$V(F)$ refers to the set of vertices in $G$ that are incident to 
at least one of the edges in $F$.
Furthermore, $G$ is \emph{$(k,\ell)$-tight} if it is $(k,\ell)$-sparse and
$|E|= \max\{k|V|-\ell,0\}$.

Fix such $k$ and $\ell$.
For a simple undirected graph $G=(V,E)$, consider the family $\FF_{k,\ell}(G) \subseteq 2^E$
of all edge subsets $F\subseteq E$ such that $G_F=(V,F)$ is $(k,\ell)$-tight.
It is known that $\FF_{k,\ell}(G)$ is the base family of a matroid, called the \emph{$(k,\ell)$-sparsity matroid
of $G$} \cite{MR2785822}.\footnote{%
In some papers and books, a sparsity matroid is also called a \emph{count matroid} \cite{MR2848535}.
}
Sparsity matroids naturally appear at various places in discrete mathematics, especially
in combinatorial optimization and combinatorial rigidity:
The family of spanning trees forms a $(1,1)$-sparsity matroid;
The family of spanning $1$-trees forms a $(1,0)$-sparsity matroid;
The family of disjoint unions of $k$ spanning trees forms a $(k,k)$-sparsity matroid
\cite{Nash-Williams01011961};
The family of generically minimal rigid subgraphs in the plane forms a $(2,3)$-sparsity matroid
\cite{laman70};
A $(\binom{d+1}{2},\binom{d+1}{2})$-sparsity matroid is a key for analyzing the rigidity of $d$-dimensional body-and-bar
frameworks
\cite{Tay198495}.

We consider the following base polytope of a $(k,\ell)$-sparsity matroid:
\[
P_{k,\ell}(G) = \conv(\{ \chi_F \in \R^E \mid F \in \FF_{k,\ell}(G)\} ),
\]
where $\chi_F$ is the incidence vector of $F$.
Then,
$P_{k,\ell}(G)$ can be written as
\[
P_{k,\ell}(G) = 
\left\{
x \in \R^E
\,\middle|\,
\begin{array}{l}
\displaystyle \sum_{e \in E(X)}x_e \leq \max\{k|X|-\ell, 0\} \quad \text{for all } X \subseteq V,\\
\displaystyle \sum_{e \in E}x_e = \max\{k|V| - \ell, 0\},\\
x_e \geq 0 \quad \text{for all } e \in E
\end{array}
\right\},
\]
where $E(X)$ refers to the set of edges in $G$ that join two vertices in $X$ \cite{MR2848535}.
Note that if $X \subseteq V$ satisfies $k|X|-\ell \leq 0$, then
the inequality $\sum_{e\in E(X)}x_e \leq \max\{k|X|-\ell,0\}$ does not
define a facet of $P_{k,\ell}(G)$.

The main purpose of this paper is to give a compact extended formulation of $P_{k,\ell}(G)$.

Let $P \subseteq \R^d$ be a convex polytope.
An \emph{extended formulation} of $P$ is a polytope $Q \subseteq \R^k$, $k\geq d$, such 
that there exists a linear projection $\pi\colon \R^k \to \R^d$ with $\pi(Q)=P$.
The \emph{size} of an extended formulation $Q$ is the number of facets of $Q$.
For several cases, the size of an extended formulation can be much smaller than
the size of the original polytope.
For example, while the description of the spanning tree polytope by Edmonds \cite{edmondsgreedy}
uses exponentially many inequalities, Martin \cite{Martin1991119}
gave an extended formulation of that polytope of size $O(n^3)$, where
$n$ is the number of vertices.
However, as Rothvo\ss \cite{DBLP:journals/mp/Rothvoss13} showed, there exists
a 0/1-polytope $P$ in $\R^d$ such that any extended formulation of $P$ has size
exponential in $d$.
Rothvo\ss \cite{DBLP:journals/mp/Rothvoss13} further noted that
there exists a matroid polytope with such a property.
However, since his proof was based on a counting argument, no explicit matroid with such 
a property was given.
Since the spanning tree polytope is the base polytope of a matroid, 
these previous results give rise to a question on which classes of matroids admit compact
extended formulations and which classes do not.
Since $P_{k,\ell}(G)$ is the base polytope of a matroid, this paper gives another
class of matroids that have compact extended formulations.
We here note that Goemans \cite{goemanspermutahedron} 
gave a compact extended formulation of permutahedra, which are the
base polytopes of polymatroids.

Our proof is based on a technique 
proposed by Faenza et al.\ \cite{FaenzaFGT12}.
In their method, we design a 
randomized communication protocol that computes the slack matrix
of a polytope in expectation.
The number of exchanged bits will determine the size of an extended formulation.
Indeed, Faenza et al.\ \cite{FaenzaFGT12}
gave an alternative proof of a result by Martin \cite{Martin1991119} on the spanning tree
polytope, and our proof can be seen as a generalization of their proof.
Our protocol gives an extended formulation of size $O(|V||E|)$ when
$k \geq \ell$, and $O(|V|^2 |E|)$ when $k \leq \ell$.

\section{A randomized communication protocol}

Here, we give a brief overview of the technique
based on a randomized communication protocol by Faenza et al.\ \cite{FaenzaFGT12}.
For a detailed and precise account, we refer to the original paper.

Let $P \subseteq \R^d$ be a polytope, with its extreme points
$\{z_1,\dots,z_n\} \subseteq \R^d$.
Suppose that $P$ is represented as
\[
P = \{ x \in \R^d \mid Ax \leq b\}
\]
for some $A \in \R^{m\times d}$ and $b \in \R^m$.
We denote the $i$-th row of $A$ by $a_i$ and the $i$-th entry of $b$ by $b_i$.
We assume that each inequality $a_i x \leq b_i$ in the description defines a facet of $P$.
With this assumption, the description of $P$ is unique up to scaling (i.e., a scalar multiplication
of each row).

The \emph{slack matrix} of $P$ is a non-negative matrix $S \in \R^{m\times n}$ defined as
\[
S_{i,j} = b_i - a_i z_j
\]
for every $i \in\{1,\dots,m\}$ and $j \in \{1,\dots,n\}$.
Namely, each row of $S$ corresponds to an inequality (or a facet) that defines $P$, and
each column of $S$ corresponds to an extreme point of $P$.
Since an extreme point belongs to $P$, it holds $Az_j \leq b$ for each $j \in \{1,\dots,n\}$,
and therefore $S_{i,j} = b_i - a_i z_j \geq 0$ for every 
$i \in\{1,\dots,m\}$ and $j \in \{1,\dots,n\}$.

We consider the following (cooperative) game.
Alice and Bob act as players.
Before the game starts, 
the polytope $P$ is known to both players with its set of extreme points and its
set of facet-defining inequalities.
When the game starts, 
Alice secretly receives one of the facet-defining inequalities, for example, the $i$-th inequality
$a_i x \leq b_i$, which is not visible to Bob, and 
Bob secretly receives one of the extreme points, for example, the $j$-th extreme point $z_j$,
which is not visible to Alice.
In the course of the game, they are allowed to communicate by exchanging information.
They are also allowed to use private random bits.
The players can agree with the way of communication (or a randomized communication protocol) 
before they start a game.
The goal of Alice and Bob is to output the $i,j$-th entry $S_{i,j}$ of the slack matrix at the
end of the protocol.
Note that the output can be made by either player, but at the end of the protocol
when the player outputs a value, he or she cannot use a random bit.

Since they are allowed to use random bits, the output may be wrong.
Therefore, we only require them to output $S_{i,j}$ in expectation.
Namely, the protocol \emph{computes $S$ in expectation} if
the expectation of the output (over the random bits used by
Alice and Bob) is equal to $S_{i,j}$ when Alice owns
$a_i x \leq b_i$ and Bob owns $z_j$.

In a trivial protocol, Bob sends the whole information of $z_j$
to Alice, and Alice directly computes $b_i - a_i z_j$ and output it.
This computes $S$ without any error, but the number of exchanged
bits is large.
We want to design a protocol that computes $S$ in expectation with
small number of exchanged bits.

Faenza et al.\ \cite{FaenzaFGT12}
characterized the size of an extended formulation
by the complexity of such a communication protocol.

\begin{proposition}[Faenza, Fiorini, Grappe, Tiwary \cite{FaenzaFGT12}]
\label{ffgt}
Let $P$ be a polytope with the slack matrix $S$.
Then, 
there exists a randomized communication protocol that computes $S$ in expectation with exchanging 
at most $\lceil \log_2 r\rceil$ bits of information
if and only if there exists an extended formulation of $P$
of size at most $r$.
\end{proposition}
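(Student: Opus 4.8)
The plan is to route the argument through Yannakakis's classical factorization theorem and then build a bridge between nonnegative matrix factorizations and the communication model. Recall that the slack matrix $S$ is an $m\times n$ nonnegative matrix. I would first establish (or simply invoke, as it is classical) Yannakakis's theorem in the form: $P$ has an extended formulation of size at most $r$ if and only if $S$ admits a factorization $S=TU$ with $T\in\R^{m\times r}$ and $U\in\R^{r\times n}$ both entrywise nonnegative. For the ``only if'' part, one starts from a size-$r$ extension $Q=\{w : Cw\le d\}$ with a coordinate projection $\pi$ onto $P$; each facet inequality $a_ix\le b_i$ of $P$ pulls back to an inequality valid on $Q$, so Farkas's lemma writes $b_i-a_i\pi(w)$ as a nonnegative combination $\sum_k\mu_{ik}(d_k-c_kw)$, and lifting each extreme point $z_j$ to some $\hat z_j\in Q$ gives $S_{ij}=\sum_k\mu_{ik}(d_k-c_k\hat z_j)$, the desired nonnegative factorization. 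For the ``if'' part, given $S=TU$ one takes $Q=\{(x,y)\in\R^d\times\R^r : Ax+Ty=b,\ y\ge 0\}$; using the convex-combination representation $x=\sum_j\lambda_jz_j$ together with the identity $Az_j+(b-Az_j)=b$ shows $\pi(Q)=P$, while $Q$ is cut out by only the $r$ inequalities $y\ge 0$.

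Granting this, it remains to show that the nonnegative rank of $S$ is at most $r$ if and only if there is a randomized protocol computing $S$ in expectation with at most $\lceil\log_2 r\rceil$ exchanged bits. For the direction from a protocol to a factorization, I would analyze the protocol tree. Because Alice and Bob use \emph{private} coins, the probability of reaching a fixed leaf $t$ on input $(i,j)$ factors as $\alpha_t(i)\,\beta_t(j)$, where $\alpha_t$ depends only on Alice's input and coins and $\beta_t$ only on Bob's, and the value $\Lambda_t$ output at leaf $t$ is a fixed nonnegative number. Hence $S_{ij}=\sum_t\Lambda_t\,\alpha_t(i)\,\beta_t(j)$, which exhibits $S$ as a sum of nonnegative rank-one matrices indexed by the leaves; a protocol of worst-case cost $\lceil\log_2 r\rceil$ can be taken to have at most $r$ relevant leaves, so the nonnegative rank is at most $r$. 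Conversely, given $S=\sum_{k=1}^r f_kg_k^\top$ with $f_k\in\R^m_{\ge 0}$ and $g_k\in\R^n_{\ge 0}$, both players know the factorization; setting $R_i=\sum_k(f_k)_i$ and normalizing Alice's row to a probability vector $(f_k)_i/R_i$ over $\{1,\dots,r\}$, Alice samples an index $k$ from this distribution, communicates it with $\lceil\log_2 r\rceil$ bits, and the players output $R_i\,(g_k)_j$, whose expectation over Alice's coins equals $\sum_k(f_k)_i(g_k)_j=S_{ij}$.

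I expect the main obstacle to be the bookkeeping on the communication side rather than the algebra. On the protocol-to-factorization side, one must check that the cost $\lceil\log_2 r\rceil$ really forces at most $r$ contributing leaves (discarding leaves of zero value, and, if the model allows signed outputs, splitting them into positive and negative parts at the cost of a constant factor). On the factorization-to-protocol side, one must verify that only the \emph{index} $k$ is charged as bits, while the real-valued data attached to the messages — the normalization $R_i$ and the entry $(g_k)_j$ — are handled within the model without inflating the $\lceil\log_2 r\rceil$ budget; this is where the precise conventions of the protocol model do the work. The two Yannakakis constructions are routine linear algebra once written down, and the conceptual heart of the whole statement is the single observation that \emph{privacy} of the random bits is exactly what makes leaf probabilities multiplicative, turning a protocol into a nonnegative factorization and back.
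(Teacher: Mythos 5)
First, a remark on the comparison itself: the paper does not prove Proposition~\ref{ffgt} --- it is quoted from Faenza et al.\ and used as a black box (the only internal comment being that their proof is constructive). Your architecture is exactly that of the cited source: Yannakakis's factorization theorem (extension complexity equals nonnegative rank of the slack matrix) composed with the equivalence between nonnegative rank and the cost of a private-coin protocol computing $S$ in expectation, with the multiplicativity of leaf probabilities as the hinge. The direction actually used in this paper (protocol $\Rightarrow$ small extended formulation) is handled correctly in your sketch: leaf probabilities factor as $\alpha_t(i)\beta_t(j)$, the leaf value can be absorbed into the factor belonging to the player who owns the leaf, and the factorization $S=TU$ yields $Q=\{(x,y): Ax+Ty=b,\ y\ge 0\}$ with at most $r$ facets. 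The $2^{\lceil\log_2 r\rceil}$-versus-$r$ rounding slack you flag is already present in the statement and is not your problem to repair.

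There is, however, one concrete gap in the converse direction. You have Alice sample $k$ with probability $(f_k)_i/R_i$ and then ``the players output $R_i(g_k)_j$'' --- but in this model the output is announced by a \emph{single} player as a deterministic function of the transcript and that player's own input, and $R_i(g_k)_j$ depends on both inputs: Bob does not know $R_i$ and Alice does not know $(g_k)_j$. The standard repair is to push Bob's dependence into one more random bit rather than into the output value: after receiving $k$, Bob sends back a bit equal to $1$ with probability $(g_k)_j/\Vert g_k\Vert_\infty$, and Alice outputs $R_i\Vert g_k\Vert_\infty$ (a function of her input and the transcript alone) on a $1$ and $0$ on a $0$; alternatively, rescale so that $\sum_k (f_k)_i\le 1$ for every $i$ and give Alice a dummy message on which Bob outputs $0$. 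Either variant has expectation $\sum_k (f_k)_i(g_k)_j=S_{ij}$ at the cost of $O(1)$ extra bits, which is within the rounding already tolerated. A second, smaller point: in the Yannakakis step, affine Farkas a priori gives $b_i-a_i\pi(w)=\mu_{i0}+\sum_k\mu_{ik}(d_k-c_kw)$ with a nonnegative constant $\mu_{i0}$; to drop $\mu_{i0}$ and keep the inner dimension at $r$ (rather than $r+1$) you should use that $a_ix\le b_i$ is facet-defining, hence tight somewhere on $P$, so strong LP duality supplies $\mu_i\ge 0$ with $\mu_i^\top d=b_i$ exactly.
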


Their proof is indeed constructive, and from a randomized
communication protocol we may obtain an extended formulation.

\section{A protocol for sparsity matroids: when $k \geq \ell$}

To use Proposition \ref{ffgt},  we design a randomized communication protocol
that computes the slack matrix of $P_{k,\ell}(G)$ in expectation.
We may assume that $|V|\geq 2$ since otherwise the size of an extended formulation
is constant.

Alice holds a vertex subset $X \subseteq V$ with $k|X|-\ell \geq 0$ and $|X| \geq 2$
that specifies a facet of $P_{k,\ell}(G)$,\footnote{%
We do not require that \emph{all} such vertex subsets $X$ define facets of $P_{k,\ell}(G)$, but we \emph{assume} the set $X$ that Alice holds defines a facet.
} and
Bob holds an edge subset $F \in \FF_{k,\ell}(G)$ that specifies an extreme point of $P_{k,\ell}(G)$.
Then, the slack $S_{X,F}$ is evaluated as 
\[
S_{X,F} = k|X| - \ell - |F \cap E(X)|.
\]

We first describe a protocol when $k \geq \ell$.
In the sequel, $\rho(v)$ denotes the in-degree of $v$.

\begin{framed}
\noindent
{\bf Protocol A}
\begin{description}
\item[Step 1:] Alice sends a vertex $x \in X$ to Bob.
\item[Step 2:] 
Bob orients the edges of $F$ in such a way that $\rho(x)=k-\ell$ and $\rho(z)=k$
for all $z \in V\setminus\{x\}$.
Then, Bob picks an oriented edge $(u,v)$ of $F$ uniformly at random and sends it
to Alice.
\item[Step 3:] Alice outputs $k|V|-\ell$ if $u\not\in X$ and $v\in X$, and
outputs $0$ otherwise.
\end{description}
\end{framed}

Note that $k-\ell \geq 0$ since $k \geq \ell$.

The number of exchanged bits is $\lceil \log|V| \rceil$ at Step 1
and $\lceil \log |E| \rceil + 1$ at Step 2.
Thus, by Proposition \ref{ffgt},
this protocol gives an extended formulation of $P_{k,\ell}(G)$ of size
$O(|V| |E|)$ if this is a correct protocol.

To complete the proof,
We will argue (1) 
the protocol can always be performed (in particular, 
the orientation by Bob can always be found), and
(2) the protocol computes the slack matrix
in expectation.

\subsection{Feasibility of Step 2}

We will show that Bob can always orient the edges of $F$ as described in Step~2.
\begin{lemma}
\label{lem:orientation2}
Let $k,\ell$ be integers such that $0 \leq \ell \leq k$, 
$H=(V,F)$ be a $(k,\ell)$-tight graph with $|V|\geq 2$,
and $x \in V$ be an arbitrary vertex
of $H$.
Then, $H$ has an orientation $\vec{H}=(V,\vec{F})$ such that
$\rho(x)=k-\ell$ and $\rho(z) = k$ for all $z \in V\setminus \{x\}$.
\end{lemma}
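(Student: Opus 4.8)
The plan is to verify the degree constraints against the Hakimi/Nash-Williams type orientation theorem: a graph $H=(V,F)$ admits an orientation with prescribed in-degrees $\rho(z)=m(z)$ for all $z \in V$ if and only if $\sum_{z\in V}m(z)=|F|$ and $\sum_{z\in X}m(z) \geq |F\cap E(X)|$ for every $X\subseteq V$. (This is the classical degree-constrained orientation result; it follows from Hall's theorem applied to the bipartite "edge versus endpoint" incidence structure, or from a simple matroid-union / flow argument.) So I would set $m(x)=k-\ell$ and $m(z)=k$ for $z\neq x$, and check the two conditions using $(k,\ell)$-tightness of $H$.

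First I would check the total: $\sum_{z\in V}m(z) = k|V| - \ell = |F|$, where the last equality is exactly $(k,\ell)$-tightness together with $|V|\geq 2$ (so that $\max\{k|V|-\ell,0\}=k|V|-\ell$; note $k|V|-\ell \geq 2k-\ell \geq 0$ since $\ell\leq 2k-1$, and in fact $\geq k-\ell\geq 0$ here). Second, for the subset inequality, fix $X\subseteq V$ and write $F_X := F\cap E(X)$. I need $\sum_{z\in X}m(z) \geq |F_X|$. By $(k,\ell)$-sparsity applied to the edge set $F_X$, we have $|F_X| \leq \max\{k|V(F_X)| - \ell, 0\}$. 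The subtlety is that $V(F_X)$ may be a proper subset of $X$ (isolated vertices of $X$ in the subgraph), so I would split into cases according to whether $x \in X$ and whether the bound $k|V(F_X)|-\ell$ is the active one. If $x\notin X$, then $\sum_{z\in X}m(z) = k|X| \geq k|V(F_X)| \geq k|V(F_X)|-\ell \geq |F_X|$ when $|V(F_X)|\geq 1$, and trivially $|F_X|=0$ otherwise. If $x\in X$, then $\sum_{z\in X}m(z) = k|X|-\ell$; here I would use that $V(F_X)\subseteq X$ gives $k|V(F_X)| - \ell \leq k|X|-\ell$, so again $|F_X| \leq \max\{k|V(F_X)|-\ell,0\} \leq k|X|-\ell$ provided the right-hand side is nonnegative — which holds because $x\in X$ and $|X|\geq 1$ forces $k|X|-\ell \geq k-\ell \geq 0$. (When $|V(F_X)|$ is small the $\max$ picks $0\leq |F_X|$ trivially, so that case is fine too.)

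Having verified both hypotheses of the orientation theorem, I conclude the desired orientation $\vec{H}$ exists, which is exactly the statement of Lemma~\ref{lem:orientation2}.

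The main obstacle I anticipate is the bookkeeping around $V(F_X)$ versus $X$ and the $\max\{\cdot,0\}$ in the sparsity definition: one must make sure the non-facet-defining ("small $X$") cases are handled, and in particular that $k-\ell\geq 0$ (which holds by the standing assumption $\ell\leq k$ in this section) is used exactly where $x\in X$ so that the in-degree bound $k-\ell$ assigned to $x$ does not underflow. Everything else is a direct substitution of the tightness/sparsity inequalities into the orientation criterion. Alternatively, if one prefers to avoid quoting the orientation theorem, the same conclusion can be reached by a short inductive/exchange argument: orient edges greedily and use $(k,\ell)$-sparsity to show no in-degree constraint is ever forced to be exceeded — but invoking the classical degree-constrained orientation result is cleaner and is the route I would take.
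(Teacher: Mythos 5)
Your proposal is correct and follows essentially the same route as the paper: both invoke Hakimi's degree-constrained orientation theorem with $m(x)=k-\ell$ and $m(z)=k$ for $z\neq x$, and verify the two conditions via $(k,\ell)$-tightness and $(k,\ell)$-sparsity. Your extra care with $V(F_X)\subseteq X$ and the $\max\{\cdot,0\}$ in the sparsity bound fills in a detail the paper passes over silently, but it is the same argument.
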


To prove the lemma, we use the following theorem by Hakimi \cite{hakimi}.
\begin{lemma}[Hakimi]
\label{hakimi}
Let $H=(V,F)$ be an undirected graph and
$m(v)$ be a non-negative integer for every $v \in V$.
Then, $H$ has an orientation such that $\rho(v) = m(v)$ for all $v$ if and only if
\begin{equation}
|F|=\sum_{v \in V}m(v) \qquad \text{ and } \qquad
|F(X)| \leq \sum_{v \in X} m(v) \quad \text{ for all } X \subseteq V,
\label{eq:hakimi}
\end{equation}
where $F(X)$ denotes the set of edges in $F$ that joins two vertices of $X$.
\end{lemma}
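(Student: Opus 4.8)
The plan is to prove both implications, treating the ``only if'' direction as a short counting argument and the ``if'' direction as a reduction to Hall's marriage theorem. For necessity, suppose $H$ admits an orientation $\vec{H}$ with $\rho(v)=m(v)$ for every $v\in V$. Each edge of $F$ has exactly one head, so summing in-degrees over all vertices gives $|F|=\sum_{v\in V}\rho(v)=\sum_{v\in V}m(v)$. For an arbitrary $X\subseteq V$, every edge in $F(X)$ has both endpoints in $X$, hence its head lies in $X$; since the number of edges of $F(X)$ whose head is a fixed vertex $v$ is at most $\rho(v)$, counting the edges of $F(X)$ according to their heads yields $|F(X)|\le\sum_{v\in X}\rho(v)=\sum_{v\in X}m(v)$, which is the claimed inequality.

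For sufficiency I would introduce an auxiliary bipartite graph $B$. Its left vertex class is the edge set $F$ (one node per edge, so parallel edges are simply distinct nodes), and its right vertex class consists of $m(v)$ copies of each vertex $v\in V$; join a left node $e=\{u,w\}$ to every copy of $u$ and every copy of $w$. A matching of $B$ that saturates the left class $F$ corresponds to an orientation of $H$ in which each edge $e$ is directed toward the vertex one of whose copies it is matched to, and then $\rho(v)$ equals the number of matched copies of $v$. Since the hypothesis $|F|=\sum_{v\in V}m(v)$ says that the two vertex classes of $B$ have the same size, such a matching is in fact perfect, so every copy of $v$ is matched and $\rho(v)=m(v)$ for all $v$; thus it suffices to verify Hall's condition for the left class.

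Let $A\subseteq F$ be arbitrary and put $X:=V(A)$, the set of all endpoints of edges in $A$. Every edge of $A$ has both endpoints in $X$, so $A\subseteq F(X)$ and hence $|A|\le|F(X)|$. On the other hand, the neighbourhood $N_B(A)$ of $A$ in $B$ is exactly the collection of copies of vertices in $X$, of total size $\sum_{v\in X}m(v)$. Combining these with the hypothesis $|F(X)|\le\sum_{v\in X}m(v)$ gives $|A|\le\sum_{v\in X}m(v)=|N_B(A)|$, so Hall's condition holds, $B$ has a matching saturating $F$, and the corresponding orientation proves the lemma. I expect no genuine obstacle here: the one point deserving care is the translation between matchings of $B$ and orientations of $H$ --- in particular that the equality $|F|=\sum_{v}m(v)$ is precisely what promotes a left-saturating matching to a perfect one, and that treating edges as distinct left nodes makes parallel edges (and, if one allows them, loops, via the singleton constraint $X=\{v\}$) harmless. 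Equivalently, the same argument can be phrased through the max-flow--min-cut theorem on the obvious network, but the Hall formulation keeps the bookkeeping minimal.
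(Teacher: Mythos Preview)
Your proof is correct. The paper does not actually prove this lemma: it is quoted as a classical result of Hakimi and simply cited, so there is no ``paper's own proof'' to compare against. Your Hall-matching argument is one of the standard elementary proofs of this orientation theorem; the alternative the paper implicitly defers to (Hakimi's original) goes through network flows, and the two are of course equivalent here. One small remark: in the Hall step you assert that $N_B(A)$ equals the set of all copies of vertices in $X=V(A)$, which is correct but uses that every vertex of $X$ is an endpoint of \emph{some} edge in $A$ (true by definition of $V(A)$); you might make that explicit. Otherwise the write-up is clean and the translation between left-saturating matchings and orientations with the prescribed in-degrees is handled properly, including the observation that $|F|=\sum_v m(v)$ forces such a matching to be perfect.
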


\begin{proof}[Proof of Lemma \ref{lem:orientation2}]
It suffices to verify the condition (\ref{eq:hakimi}).
Let $x\in V$ be an arbitrary vertex, and
set
$m(x)=k-\ell$, and $m(z) = k$ for all $z \in V\setminus \{x\}$.
Then, the first condition follows since
\[
\sum_{v \in V}m(v) = k(|V|-1)+(k-\ell) = k|V| - \ell = |F|,
\]
where 
the last equality is a consequence of $(k,\ell)$-tightness.
To see the second condition, let $X \subseteq V$ be an arbitrary vertex subset.
If $|X| \geq 1$, then we have
\[
\sum_{v \in X}m(v) \geq k(|X|-1)+(k-\ell) = k|X| - \ell \geq |F(X)|.
\]
Here, the first equality follows since $k \geq \ell$ and
the last inequality is a consequence of $(k,\ell)$-sparsity.
If $|X| = 0$, then
\[
\sum_{v \in X}m(v) = 0 = |F(X)|.
\]
Thus, the condition (\ref{eq:hakimi}) holds in both cases.
\end{proof}

\subsection{Correctness}

We now prove that the expected output of the protocol is 
the slack $S_{X,F}=k|X| - \ell - |F \cap E(X)|$.
Let $\vec{F}$ be the orientation of $F$ obtained at Step 2.
Then, since $x \in X$, it holds that
\[
\sum_{v \in X} \rho(v) = k(|X|-1) + (k-\ell) = k|X| -\ell. 
\]
On the other hand, since the left-hand side counts the number of oriented edge
that has its head in $X$,
\begin{align*}
\sum_{v \in X} \rho(v) 
 &= |\{(u,v) \in \vec{F} \mid u \in X, v \in X\}| + |\{(u,v) \in \vec{F} \mid u \not\in X, v \in X\}|\\
 &= |F\cap E(X)| + |\{(u,v) \in \vec{F} \mid u \not\in X, v \in X\}|.
\end{align*}
Therefore, 
the slack is the number of edges $(u,v)$ in $\vec{F}$ 
that enter $X$ from $V\setminus X$:
\[
S_{X,F}=k|X|-\ell-|F\cap E(X)| = |\{(u,v) \in \vec{F} \mid u\not\in X, v \in X\}|.
\]

At Step 2, an edge $(u,v) \in \vec{F}$ is chosen uniformly at random.
Therefore,
\begin{eqnarray*}
\Pr[u \not\in X \text{ and } v \in X \mid (u,v) \in \vec{F}]
&=&
\frac{|\{(u,v) \in \vec{F} \mid u\not\in X, v \in X\}|}{|F|}\\
&=&
\frac{S_{X,F}}{k|V|-\ell}.
\end{eqnarray*}
Therefore, the expected output is exactly
\[
(k|V|-\ell)\cdot \frac{S_{X,F}}{k|V|-\ell} + 0 \cdot \left( 1 - \frac{S_{X,F}}{k|V|-\ell}\right) = S_{X,F}.
\]

By the argument above, together with Proposition \ref{ffgt}, we have finished the proof of the
following theorem.

\begin{theorem}
Let $k,\ell$ be integers such that $0 \leq \ell \leq k$, and
$G=(V,E)$ be a simple undirected graph.
Then, the polytope $P_{k,\ell}(G)$ of the $(k,\ell)$-tight subgraphs
of $G$ has an extended formulation of size $O(|V||E|)$.
\qed
\end{theorem}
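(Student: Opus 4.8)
The plan is to apply Proposition \ref{ffgt}: to obtain an extended formulation of $P_{k,\ell}(G)$ of size $O(|V||E|)$ it suffices to design a randomized communication protocol that computes the slack matrix $S$ in expectation while exchanging only $O(\log(|V||E|))$ bits. Here Alice holds a vertex set $X \subseteq V$ (with $k|X|-\ell \geq 0$, $|X|\geq 2$) defining a facet and Bob holds an edge set $F \in \FF_{k,\ell}(G)$ defining an extreme point, and the entry to be produced is $S_{X,F} = k|X| - \ell - |F \cap E(X)|$. Since $|V|\leq 1$ gives a constant-size formulation, I assume $|V|\geq 2$.

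First I would have Alice send a single vertex $x \in X$ to Bob, at a cost of $\lceil \log_2|V|\rceil$ bits. Bob then reorients the edges of $F$ so that $x$ receives in-degree $k-\ell$ and every other vertex receives in-degree $k$; since $H=(V,F)$ is $(k,\ell)$-tight this is a balanced orientation, and placing the deficiency at $x\in X$ is what makes $\sum_{v\in X}\rho(v) = k(|X|-1)+(k-\ell)=k|X|-\ell$, the exact right-hand side of the facet inequality. Bob then picks a uniformly random oriented edge, sends it to Alice ($\lceil\log_2|E|\rceil+1$ bits), and Alice outputs $k|V|-\ell$ if that edge enters $X$ (tail outside, head inside) and $0$ otherwise. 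The total communication is $O(\log|V|+\log|E|)$ bits, so Proposition \ref{ffgt} will give $r=O(|V||E|)$ once the protocol is justified.

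Two points then have to be checked. For feasibility, I must show Bob can always produce the prescribed orientation (Lemma \ref{lem:orientation2}); this is where the hypothesis $k\geq\ell$ is essential, since it ensures the target in-degree $k-\ell$ at $x$ is nonnegative. I would invoke Hakimi's orientation theorem (Lemma \ref{hakimi}) with $m(x)=k-\ell$ and $m(z)=k$ for $z\neq x$: the equality $\sum_v m(v)=k|V|-\ell=|F|$ is exactly $(k,\ell)$-tightness, and for every nonempty $Y\subseteq V$ the inequality $\sum_{v\in Y}m(v)\geq k|Y|-\ell\geq|F(Y)|$ follows from $k\geq\ell$ together with $(k,\ell)$-sparsity (the case $Y=\emptyset$ being trivial). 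For correctness, I would note that $\sum_{v\in X}\rho(v)$ counts on one hand $k|X|-\ell$ by the degree prescription, and on the other hand $|F\cap E(X)|$ plus the number of oriented edges entering $X$ from $V\setminus X$; hence $S_{X,F}$ equals precisely the number of such entering edges. Since Bob's edge is uniform over the $|F|=k|V|-\ell$ edges, the probability it enters $X$ is $S_{X,F}/(k|V|-\ell)$, and the expected output is $(k|V|-\ell)\cdot\frac{S_{X,F}}{k|V|-\ell}=S_{X,F}$.

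I expect the feasibility of the orientation in Step 2 to be the main obstacle — it is the step that actually exploits the combinatorial structure (tightness, sparsity, and $k\geq\ell$) and the external input (Hakimi's theorem), whereas once the orientation is known to exist, the bit count and the expectation computation are routine. Combining the protocol with Proposition \ref{ffgt} then yields the claimed extended formulation of size $O(|V||E|)$.
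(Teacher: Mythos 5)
Your proposal is correct and follows essentially the same route as the paper: the same protocol (Alice sends $x\in X$, Bob orients $F$ with $\rho(x)=k-\ell$ and $\rho(z)=k$ elsewhere and returns a uniformly random oriented edge), the same use of Hakimi's theorem for feasibility, and the same expectation computation, combined with Proposition~\ref{ffgt}. No gaps.
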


Faenza et al.\ \cite{FaenzaFGT12} gave a randomized communication protocol for
the slack matrix of a spanning tree polytope, which yields an extended formulation
of size $O(|V||E|)$.
They also used an orientation of the graph as in our argument, and in this
sense our protocol is an extension of theirs.
Remember that the family of spanning trees forms a $(1,1)$-sparsity matroid.

\section{A protocol for sparsity matroids: when $k \leq \ell$}

Protocol A does not work when $k < \ell$.
Therefore, we provide another protocol, as shown below, to
give an extended formulation of size
$O(|V|^2|E|)$.

\begin{framed}
\noindent
{\bf Protocol B}
\begin{description}
\item[Step 1:] Alice sends two arbitrary vertices $x,y \in X$ to Bob.
\item[Step 2:] Bob orients the edges of $F$ in such a way that $\rho(x)=0$,
$\rho(y) = 2k-\ell$ and $\rho(z) = k$ for all $z \in V\setminus \{x,y\}$.
Then, Bob picks an oriented edge $(u,v)$ of $F$ uniformly at random and sends it
to Alice.
\item[Step 3:] Alice outputs $k|V|-\ell$ if $u\not\in X$ and $v\in X$, and
outputs $0$ otherwise.
\end{description}
\end{framed}

Note that $k \geq 2k-\ell$ since $k \leq \ell$.

The number of exchanged bits is $2 \lceil \log|V| \rceil$ at Step 1
and $\lceil \log |E| \rceil + 1$ at Step 2
(an extra one bit is needed for specifying the orientation).
Thus, by Proposition \ref{ffgt},
this protocol gives an extended formulation of $P_{k,\ell}(G)$ of size
$O(|V|^2 |E|)$ if this is a correct protocol.

To complete the proof, we will again argue (1) the protocol can always be performed (in particular, 
the orientation by Bob can always be found), and (2) the protocol computes the slack matrix
in expectation,
in the same way as in the case where $k \geq \ell$.
The proofs are almost verbatim, but
for the sake of conciseness, we will repeat the arguments below.

\subsection{Feasibility of Step 2}

We will show that Bob can always orient the edges of $F$ as described in Step~2.
\begin{lemma}
\label{lem:orientation}
Let $k,\ell$ be integers such that $0 \leq k \leq \ell \leq 2k - 1$,
$H=(V,F)$ be a $(k,\ell)$-tight graph with $|V|\geq　2$,
and $x,y \in V$ two arbitrary vertices
of $H$.
Then, $H$ has an orientation $\vec{H}=(V,\vec{F})$ such that
$\rho(x)=0$, $\rho(y)=2k-\ell$, and $\rho(z) = k$ for all $z \in V\setminus \{x,y\}$.
\end{lemma}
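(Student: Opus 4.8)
The plan is to invoke Hakimi's theorem (Lemma~\ref{hakimi}), exactly as in the proof of Lemma~\ref{lem:orientation2}, with the in-degree prescription $m(x)=0$, $m(y)=2k-\ell$, and $m(z)=k$ for every $z\in V\setminus\{x,y\}$. Here I assume $x\neq y$, which is no loss since in Protocol~B Alice picks two distinct vertices of $X$ (recall $|X|\geq 2$). First I would note that all these values are non-negative: $k\geq 0$, and $2k-\ell\geq 2k-(2k-1)=1$ because $\ell\leq 2k-1$; the inequality $2k-\ell\geq 1$ will be reused below. The first condition in~(\ref{eq:hakimi}) is immediate: summing gives $\sum_{v\in V}m(v)=k(|V|-2)+(2k-\ell)=k|V|-\ell=|F|$, where the last equality is $(k,\ell)$-tightness (and is where $|V|\geq 2$ enters, ensuring $k|V|-\ell\geq 2k-\ell>0$).

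The substantive part is the second condition, $|F(X)|\leq\sum_{v\in X}m(v)$ for every $X\subseteq V$. Since $(k,\ell)$-sparsity gives $|F(X)|\leq\max\{k|X|-\ell,0\}$, it suffices to check that $\sum_{v\in X}m(v)$ is at least both $k|X|-\ell$ and $0$, which I would do by a case analysis on $X\cap\{x,y\}$. If $x,y\notin X$, then $\sum_{v\in X}m(v)=k|X|\geq k|X|-\ell$ (as $\ell\geq 0$) and $\geq 0$. If $x\notin X$ and $y\in X$, then $\sum_{v\in X}m(v)=k|X|+(k-\ell)\geq k|X|-\ell$ since $k\geq 0$, and it equals $k(|X|-1)+(2k-\ell)\geq 2k-\ell\geq 1>0$ since $|X|\geq 1$. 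If $x\in X$ and $y\notin X$, then $\sum_{v\in X}m(v)=k(|X|-1)=k|X|-k\geq k|X|-\ell$ since $k\leq\ell$, and $\geq 0$ since $|X|\geq 1$. Finally, if $x,y\in X$, then $|X|\geq 2$ and $\sum_{v\in X}m(v)=k(|X|-2)+(2k-\ell)=k|X|-\ell\geq 2k-\ell\geq 1>0$. In every case the bound holds, so~(\ref{eq:hakimi}) is satisfied and Lemma~\ref{hakimi} yields the desired orientation.

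I do not expect a genuine obstacle here; the only delicate point is bookkeeping in the case analysis, namely keeping track of where the hypothesis $k\leq\ell$ is used (precisely when $x\in X$) versus where $\ell\leq 2k-1$ is used, via $2k-\ell\geq 1$ (when $y\in X$, and to ensure $m(y)\geq 0$), together with the degenerate subcases $|X|\in\{1,2\}$ where $F(X)$ may be empty. Once Lemma~\ref{lem:orientation} is established, the correctness of Protocol~B follows verbatim from the $k\geq\ell$ case: with this orientation $\sum_{v\in X}\rho(v)=k|X|-\ell$ whenever $x,y\in X$, so the slack $S_{X,F}=k|X|-\ell-|F\cap E(X)|$ counts the oriented edges entering $X$ from $V\setminus X$, and Step~2 samples such an edge with probability $S_{X,F}/(k|V|-\ell)$, giving expected output exactly $S_{X,F}$.
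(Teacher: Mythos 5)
Your proposal is correct and follows essentially the same route as the paper: apply Hakimi's theorem with $m(x)=0$, $m(y)=2k-\ell$, $m(z)=k$ otherwise, verify the degree sum equals $|F|$ by tightness, and bound $\sum_{v\in X}m(v)$ from below by $\max\{k|X|-\ell,0\}\geq|F(X)|$ using sparsity. The only cosmetic difference is that you organize the second condition by cases on $X\cap\{x,y\}$ while the paper splits on $|X|\geq 2$ versus $|X|\leq 1$; your version is slightly more explicit about where $k\leq\ell$ and $\ell\leq 2k-1$ are each used, but the content is the same.
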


\begin{proof}
We again use Hakimi's result (Lemma \ref{hakimi}).
Then, 
it suffices to verify the condition (\ref{eq:hakimi}).
Let $x,y\in V$ be arbitrary vertices, and
set
$m(x)=0$, $m(y)=2k-\ell$, and $m(z) = k$ for all $z \in V\setminus \{x,y\}$.
Then, the first condition follows since
\[
\sum_{v \in V}m(v) = k(|V|-2)+(2k-\ell) = k|V| - \ell = |F|,
\]
where 
the last equality is a consequence of $(k,\ell)$-tightness.
To see the second condition, let $X \subseteq V$ be an arbitrary vertex subset.
If $|X| \geq 2$, then we have
\[
\sum_{v \in X}m(v) \geq k(|X|-2)+(2k-\ell) = k|X| - \ell \geq |F(X)|.
\]
Here, 
the first equality follows since $k \leq \ell$ and 
the last inequality is a consequence of $(k,\ell)$-sparsity.
If $|X| \leq 1$, then
\[
\sum_{v \in X}m(v) \geq 0 = |E(X)|.
\]
Thus, the condition (\ref{eq:hakimi}) holds in both cases.
\end{proof}

\subsection{Correctness}

We now prove that the expected output of the protocol is 
the slack $S_{X,F}=k|X| - \ell - |F \cap E(X)|$.
Let $\vec{F}$ be the orientation of $F$ obtained at Step 2.
Then, since $x,y \in X$, it holds that
\[
\sum_{v \in X} \rho(v) = k(|X|-2) + (2k-\ell) = k|X| -\ell. 
\]
Since
\[
|F\cap E(X)| = |\{(u,v) \in \vec{F} \mid u \in X, v \in X\}|,
\]
the slack is the number of edges $(u,v)$ in $\vec{F}$ 
that enter $X$ from $V\setminus X$:
\[
S_{X,F}=k|X|-\ell-|F\cap E(X)| = |\{(u,v) \in \vec{F} \mid u\not\in X, v \in X\}|.
\]

At Step 2, an edge $(u,v) \in \vec{F}$ is chosen uniformly at random.
Therefore,
\begin{eqnarray*}
\Pr[u \not\in X \text{ and } v \in X \mid (u,v) \in \vec{F}]
&=&
\frac{|\{(u,v) \in \vec{F} \mid u\not\in X, v \in X\}|}{|F|}\\
&=&
\frac{S_{X,F}}{k|V|-\ell}.
\end{eqnarray*}
Therefore, the expected output is exactly
\[
(k|V|-\ell)\cdot \frac{S_{X,F}}{k|V|-\ell} + 0 \cdot \left( 1 - \frac{S_{X,F}}{k|V|-\ell}\right) = S_{X,F}.
\]

By the argument above, together with Proposition \ref{ffgt}, we have finished the proof of the
following theorem.
\begin{theorem}
Let $k,\ell$ be integers such that $0 \leq k \leq \ell \leq 2k - 1$, and
$G=(V,E)$ be a simple undirected graph.
Then, the polytope $P_{k,\ell}(G)$ of the $(k,\ell)$-tight subgraphs
of $G$ has an extended formulation of size $O(|V|^2|E|)$.
\qed
\end{theorem}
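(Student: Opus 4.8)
The plan is to instantiate Proposition~\ref{ffgt}: exhibit a randomized communication protocol (Protocol~B) that computes the slack matrix of $P_{k,\ell}(G)$ in expectation, and then read off the extended-formulation size from the number of bits it exchanges. Here Alice holds a facet-defining subset $X \subseteq V$ — so $k|X| - \ell \geq 0$, and we may assume $|X| \geq 2$ and $|V| \geq 2$ exactly as in the case $k \geq \ell$ — while Bob holds $F \in \FF_{k,\ell}(G)$, and the entry to be computed is $S_{X,F} = k|X| - \ell - |F \cap E(X)|$. Two things must be checked: that every step of Protocol~B can be carried out, and that the expected output is $S_{X,F}$.

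For feasibility of Step~2, after Alice transmits two vertices $x, y \in X$, Bob must orient $F$ so that $\rho(x) = 0$, $\rho(y) = 2k - \ell$, and $\rho(z) = k$ for every other $z$. This is Lemma~\ref{lem:orientation}, obtained from Hakimi's theorem (Lemma~\ref{hakimi}): the prescribed in-degrees are non-negative since $\ell \leq 2k - 1$, they sum to $k(|V| - 2) + (2k - \ell) = k|V| - \ell = |F|$ by $(k,\ell)$-tightness, and for every $Y \subseteq V$ with $|Y| \geq 2$ one has $\sum_{v \in Y} m(v) \geq k(|Y| - 2) + (2k - \ell) = k|Y| - \ell \geq |F(Y)|$ by $(k,\ell)$-sparsity; the middle inequality uses $2k - \ell \leq k$, i.e.\ $k \leq \ell$, and the cases $|Y| \leq 1$ are trivial.

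For correctness, fix the orientation $\vec F$ from Step~2. Since $x, y \in X$, summing in-degrees over $X$ gives $\sum_{v \in X} \rho(v) = k(|X| - 2) + (2k - \ell) = k|X| - \ell$, while the same sum counts oriented edges with head in $X$, namely the $|F \cap E(X)|$ edges internal to $X$ together with the edges entering $X$ from $V \setminus X$. Hence $S_{X,F}$ equals the number of edges of $\vec F$ entering $X$, so a uniformly random oriented edge of $\vec F$ enters $X$ with probability $S_{X,F}/|F| = S_{X,F}/(k|V| - \ell)$, and Step~3 — output $k|V| - \ell$ on that event and $0$ otherwise — has expectation exactly $S_{X,F}$.

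It remains to count bits: Step~1 costs $2\lceil \log |V| \rceil$, and Step~2 costs $\lceil \log |E| \rceil$ to name the chosen edge plus one bit for its orientation, so the protocol exchanges $2\lceil \log |V| \rceil + \lceil \log |E| \rceil + 1$ bits; Proposition~\ref{ffgt} then delivers an extended formulation of $P_{k,\ell}(G)$ of size $O(|V|^2 |E|)$, which is the claim. I expect the real obstacle to be the design of Step~2 rather than any verification afterwards: the single-vertex imbalance $\rho(x) = k - \ell$ used when $k \geq \ell$ becomes negative once $\ell > k$, and the remedy of forcing $\rho(x) = 0$ and over-loading a second vertex to $\rho(y) = 2k - \ell$ is what simultaneously keeps Hakimi's inequality valid and still yields $\sum_{v \in X} \rho(v) = k|X| - \ell$; paying for that second transmitted vertex is exactly what turns the $O(|V||E|)$ bound into $O(|V|^2|E|)$.
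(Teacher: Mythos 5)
Your proposal is correct and follows essentially the same route as the paper: Protocol B with the in-degree prescription $\rho(x)=0$, $\rho(y)=2k-\ell$, $\rho(z)=k$, feasibility via Hakimi's theorem using $0 \leq 2k-\ell \leq k$, the same in-degree-counting correctness argument, and the same bit count yielding $O(|V|^2|E|)$ via Proposition~\ref{ffgt}. No gaps.
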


\section*{Acknowledgements}

The problem in this paper was partially discussed at 
ELC Workshop on Polyhedral Approaches: Extension Complexity and Pivoting Lower Bounds,
held in Kyoto, Japan (June 2013).
The authors thank the organizers and the participants of the workshop for 
stimulation to this work.
Special thanks go to Hans Raj Tiwary for sharing
his insights on randomized communication protocols.
This work is supported by
Grant-in-Aid for Scientific Research from Ministry of Education,
Science and Culture, Japan and Japan Society for the Promotion of Science,
and
the ELC project (Grant-in-Aid for
Scientific Research on Innovative Areas, MEXT Japan).

\bibliographystyle{siam}
\bibliography{xcsparse}

\end{document}